\theoremstyle{plain}
\newtheorem{defn}[equation]{Definition}
\newtheorem{thm}[equation]{Theorem}
\newtheorem{cor}[equation]{Corollary}
\newtheorem{example}[equation]{Example}
\makeatletter\@addtoreset{equation}{section}\makeatother
\begin{document}

\title{A Note on the Structure of\\ Roller Coaster Permutations}

\author{William L. Adamczak\\ 
\small Department of Mathematics, Siena College \\
\small Loudonville, New York 12211 USA\\
\small\tt wadamczak@siena.edu
}
\date{}

\maketitle

\begin{abstract}
We consider the structure of roller coaster permutations as introduced by Ahmed \& Snevily\cite{ahsn}. A roller coaster permutation is described as a permutation that maximizes the total switches from ascending to descending or visa versa for the permutation and all of its subpermutations simultaneously. This paper looks at the alternating structure of these permutations and then we introduce a notion of a condition stronger than alternating for a permutation that we shall refer to as recursively alternating. We also examine the behavior of what entries can show up in even, odd, and end positions within the permutations.

\end{abstract}

\section{Introduction}

The idea of roller coaster permutations first shows up in a work of Ahmed \& Snevily \cite{ahsn} where they are described as a permutations that maximize the total switches from ascending to descending or visa versa for the permutation and all of its subpermutations simultaneously. More basically, this gets the greatest number of ups and downs for the permutation  and all possible subpermutations.

 These permutations have strong relations to pattern avoiding permutations, alternating permutations alone have connections to permutation avoidance as seen in Mansour \cite{mans} in the context of avoiding 132. The connection with forbidden subsequences and partitions of permutations is seen in Stankova \cite{stank} where particular forbidden subsequences end up being roller coaster permutations. \\
  
Throughout this paper we will use one-line notation for permutations. Let $[n] = \{1,2, \dots, n\}$. A permutation $\pi$ will be viewed as a sequence $(\pi_1,\pi_2,\ldots,\pi_n)$ where the commas may be omitted for small $n$. Let $S_n$ denote the set of all permutations of $[n]$.\\

\begin{defn}
We will use the following definitions given by Ahmed \& Snevily \cite{ahsn}:\\

\hspace{1cm} $i(\pi) = $ the number of increasing sequences of contiguous numbers in $\pi$,\\

\hspace{1cm} $d(\pi) = $ the number of decreasing sequences of contiguous numbers in $\pi$,\\

\hspace{1cm} $id(\pi) = i(\pi) + d(\pi) $ ,\\

\hspace{1cm} $X(\pi) = \{\tau : \tau \text{ is a subsequence of } \pi \text{ such that } \left|\tau\right| \geq 3\} $ , \\

\hspace{1cm} $\displaystyle{t(\pi) = \sum\limits_{\tau \in X(\pi)} id(\tau)}$ .\\

\noindent
Here we refer to contiguous numbers as consisting of at least two numbers.\\
\end{defn}

\begin{example}
	Consider the permutation $3412$ in $S_4$ where we have\\
	
	$X(1324) = \{1324, 132, 134, 412\}$\\
	
	$t(1324) = id(1324)+id(132)+id(134)+id(412) = 3 + 2 + 1 + 2 = 8$.
\end{example}
\vspace{1mm}

\begin{defn}
	
Let $RC(n) = \{\pi \in S_n : t(\pi) = \max\limits_{\sigma \in S_n} t(\sigma) \}$.\\

The members of $RC(n)$ are then referred to as a \emph{Roller Coaster Permutations}.\\

\end{defn}

\begin{example} For $n = 3,4,5,6$ we have: \\
	
	$RC(3) = \{132,213,231,312 \}$
	
	$RC(4) = \{2143,2413,3142,3412\}$
	
	$RC(5) = \{24153, 25143, 31524, 32514, 34152, 35142, 41523, 42513 \}$
	
	$RC(6) = \{326154, 351624, 426153, 451623\}$\\
	
	\textit{For a more extensive listing see \cite{ahsn}.}
	
\end{example}

\section{Structure of Roller Coaster Permutations}

Ahmed \& Snevily \cite{ahsn} conjectured that all roller coaster permutations are either \emph{alternating} or \emph{reverse-alternating} where they defined these as $\pi_1 < \pi_2 > \pi_3 < \ldots$ and $\pi_1 > \pi_2 < \pi_3 > \ldots$ respectively. Where there is no confusion we will simply refer to such a permutation as \emph{alternating} regardless of whether it is alternating or reverse alternating. \\

\begin{thm}
	If $\pi \in RC(n)$ then $\pi$ is alternating.\\
\end{thm}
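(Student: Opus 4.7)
The plan is a proof by contradiction. Assume $\pi \in RC(n)$ is not alternating and construct $\pi' \in S_n$ with $t(\pi') > t(\pi)$, contradicting the maximality defining $RC(n)$. Non-alternation provides three consecutive entries $\pi_i, \pi_{i+1}, \pi_{i+2}$ that are monotone. The complement map $\pi \mapsto (n+1-\pi_1, \ldots, n+1-\pi_n)$ exchanges ascents and descents in every subsequence, and hence preserves $id$ on every $\tau \in X(\pi)$ and therefore preserves $t$; so I may assume $\pi_i < \pi_{i+1} < \pi_{i+2}$.

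The candidate move is to swap the values at positions $i+1$ and $i+2$, producing $\pi'$ in which the triple becomes $(\pi_i, \pi_{i+2}, \pi_{i+1})$, an ascent followed by a descent. I compare $t(\pi')$ and $t(\pi)$ by matching each subsequence of $\pi$ with the subsequence of $\pi'$ on the same index set $S$ and partitioning by the intersection $S \cap \{i+1, i+2\}$. Empty intersection gives identical subsequences. For $S$ containing exactly one of $i+1, i+2$, the consecutiveness of these positions in $[n]$ implies that the symmetric-difference involution $S \leftrightarrow S \triangle \{i+1, i+2\}$ yields $\tau_{\pi'}(S) = \tau_\pi(S \triangle \{i+1, i+2\})$ as literal sequences, not merely up to pattern, so the contributions from this class cancel in pairs.

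All of $t(\pi') - t(\pi)$ therefore comes from subsequences with $S \supseteq \{i+1, i+2\}$, where the pair $\pi_{i+1}, \pi_{i+2}$ sits at two adjacent slots of $\tau$ and is transposed by the swap. The subsequence $S = \{i, i+1, i+2\}$ alone contributes a guaranteed $+1$, since its $id$ jumps from $1$ to $2$. For longer subsequences in this class the change in $id$ lies in $\{-1, 0, +1\}$ and is determined by how the values immediately flanking the transposed pair compare with $\pi_{i+1}$ and $\pi_{i+2}$. My plan for controlling this is a case analysis on those flanking values, leveraging $\pi_i < \pi_{i+1} < \pi_{i+2}$ to pair losing contributions against winning ones. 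The main obstacle is executing this bookkeeping cleanly; if the swap at $(i+1, i+2)$ falls short in some configuration I would use instead the swap at $(i, i+1)$, arguing that at least one of the two must strictly increase $t$.
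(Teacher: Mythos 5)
Your setup matches the paper's: locate a monotone triple, swap the two larger entries, and compare $t$ by partitioning index sets $S$ according to $S\cap\{i+1,i+2\}$. The reduction to the increasing case via the complement map is clean (cleaner than the paper's ``analogously''), the cancellation of the singleton class by the involution $S\mapsto S\,\triangle\,\{i+1,i+2\}$ is exactly the paper's observation that such subsequences are ``merely swapped,'' and the guaranteed $+1$ from $S=\{i,i+1,i+2\}$ is the paper's closing step. But the heart of the proof --- showing that the subsequences with $S\supseteq\{i+1,i+2\}$ of length $\geq 4$ contribute a net change $\geq 0$ --- is left as a ``plan,'' and the one quantitative claim you do make about this class is false: the change in $id$ for such a subsequence is \emph{not} confined to $\{-1,0,+1\}$. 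Take flanking values $x>\pi_{i+2}$ on the left and $y<\pi_{i+1}$ on the right; then $(x,\pi_{i+1},\pi_{i+2},y)$, e.g.\ $(6,3,4,1)$, has $id=3$ before the swap and $id=1$ after, a loss of $2$. Since losses of $2$ occur, you cannot hope to absorb them one-for-one against the single $+1$ from the bare triple; you need a systematic compensation mechanism, and that is precisely what your proposal does not supply.

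The paper's missing ingredient is this: the left flanking value $x$ realizing a loss cannot be $\pi_i$ itself (since $\pi_i<\pi_{i+1}<\pi_{i+2}$), so every losing index set omits position $i$, and it can be paired with the index set obtained by inserting position $i$ immediately to the left of the swapped pair; because $\pi_i$ is below both swapped values, the partner subsequence gains enough to offset the loss. (The paper also anchors the argument at the \emph{rightmost} violation, which your arbitrary choice of triple discards.) Your fallback --- ``if the swap at $(i+1,i+2)$ falls short, use the swap at $(i,i+1)$ instead, arguing that at least one of the two must work'' --- is not an argument at all as it stands: you give no reason why the two swaps cannot both fail, and in any case the second swap transposes $\pi_i<\pi_{i+1}$, whose left neighbour $\pi_{i-1}$ you know nothing about, so the same bookkeeping problem recurs in a worse form. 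As written, the proposal identifies the right move but does not prove the inequality $t(\pi')>t(\pi)$.
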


\begin{proof}

Consider the rightmost positions $i-1$, $i$, $i+1$ such that $\pi$ is alternating from there to the right. Let us assume these are two consecutive increases, the case for two consecutive decreases will follow analogously. Consider $\tilde{\pi} = \pi\cdot(i,i+1)$, ie exchanging positions $i$ and $i+1$. The we note that subsequences $\sigma$ of $\pi$ involving none of these have no change in $id(\sigma)$.\\

Note that for the cases where only one of the positions $i$ or $i+1$ is involved in $\sigma$ since these are merely swapped and as such still show up as subpermutations. We may therefore consider only subpermutations containing these two consecutive positions. For any such subpermutation $\sigma$ let positions $i-1,i,i+1,i+2$ be such that $\sigma_{i} = \tilde{\pi}_{i}$ and $\sigma_{i+1} = \tilde{\pi}_{i+1}$. \\
 
Now note that the worst case is to have a subpermutation $\sigma$ that $\sigma_{i-1} > \sigma_{i} > \sigma_{i+1} > \sigma_{i+1}$ as this would decrease the ascent/descent count by two. Note that $\sigma_{i-1}$ cannot be $\pi_{i-1}$ since $\pi_{i-1} < \pi_{i}$ and $\pi_{i-1} < \pi_{i+1}$. Now note that for every subsequence $\sigma$ not involving $\pi_{i}$ there is a correspondence subsequence involving $\pi_{i}$. Those without may decrease the ascent/descent count by at most two, however any subsequence involving $\pi_{i}$ will certainly increase the ascent/descent count by two, so at worst these subsequences will not increase the total count, but this cannot decrease.\\

Now observe that we have not yet accounted for the subsequence involving only positions $i$ through $i+2$ which sees an increase in the ascent/descent count by one, thus the total count must increase meaning that the total ascent/descent count is greater for $\tilde{\pi}$ than for $\pi$, ie $t(\tilde{\pi}) > t(\pi)$, which contradicts the assumption that $\pi$ is in $RC(n)$.

\end{proof}

\vspace{1mm}

The next property of note regarding roller coaster permutations is that the value in the first and final positions of the permutation must differ by exactly one. This will be used in building up to the proof of the odd-sums conjecture of Ahmed \& Snevily \cite{ahsn} for roller coaster permutations as well as in proving a deeper self-similar structure to be introduced.

\begin{thm}
	For $\pi \in RC(n)$ $ \left| \pi_n -\pi_1 \right| =1$.
\end{thm}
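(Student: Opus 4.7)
We argue by contradiction. Suppose $\pi \in RC(n)$ with $|\pi_n - \pi_1| \ge 2$; by the reversal symmetry $\pi \mapsto \pi^R$ (which preserves $t$), we may assume $\pi_1 < \pi_n$, so $\pi_n \ge \pi_1 + 2$. The plan is to find a modification $\tilde\pi$ of $\pi$ with $t(\tilde\pi) > t(\pi)$, contradicting the maximality of $\pi$.

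The main tool is a \emph{difference-one swap}: exchange the positions of two values that differ by exactly $1$. The key observation is that for such a swap, any third value $w$ satisfies $w < v \Leftrightarrow w < v+1$, so the direction (ascent or descent) between $v$ (or $v+1$) and $w$ in any subsequence is preserved. Consequently, for any $\tau \in X(\pi)$ the only $id$-changing event is when $v$ and $v+1$ appear adjacent in $\tau$, that is, when $\tau$ contains both of their positions with no other position of $\pi$ between them; for such a subsequence the step between them flips from ascent to descent, contributing $\pm 1$ to $\Delta id(\tau)$ with sign determined by the direction of the step preceding $v$ in $\tau$. Applying this with $v := \pi_n - 1$ at some interior position $j \in \{2,\dots,n-1\}$ and $v+1 = \pi_n$ at position $n$, and reindexing the contributions by the position $p < j$ of the element immediately preceding $j$ in the critical subsequence, one obtains the clean identity
\[
t(\tilde\pi) - t(\pi) \;=\; \sum_{p=1}^{j-1} 2^{p-1}\,\mathrm{sign}(v-\pi_p).
\]
The dominant term at $p=j-1$ has weight $2^{j-2}$, strictly exceeding the combined weight $2^{j-2}-1$ of the remaining terms. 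Therefore $\Delta t > 0$ precisely when $\pi_{j-1} < v$, which, since $\pi$ is alternating by the previous theorem, is the same as $j$ being a peak.

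If instead $j$ is a valley, I apply the mirror-image argument at the left endpoint: let $u := \pi_1 + 1$ sit at position $j'$, swap positions $1$ and $j'$, and run the symmetric calculation, which yields $\Delta t > 0$ whenever $j'$ is a valley. The only remaining configuration is that $v$ is at a valley \emph{and} $u$ is at a peak; this forces strong structural constraints, namely that all neighbors of $v$ in $\pi$ are at least $\pi_n$ and all neighbors of $u$ are at most $\pi_1$. To dispose of this residual case I would perform a composite move: first interchange the values $u$ and $v$ (again a difference-one swap in the case $\pi_n - \pi_1 = 3$, and handled by an iterative reduction in $\pi_n - \pi_1$ otherwise), which preserves the alternating structure by the neighbor constraints just noted and — via a case analysis of the affected subsequences — can be shown not to decrease $t$; this relocates $v$ onto the peak previously occupied by $u$, so the peak-swap of the first case now applies and strictly increases $t$. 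The main obstacle is verifying the residual case rigorously; the sign-sum identity above does the essential work in the two principal cases.
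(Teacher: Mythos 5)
Your overall strategy --- swap the value at position $n$ with a value-adjacent entry and track the change in $t$ subsequence by subsequence --- is essentially the paper's strategy; the paper swaps position $n$ with the position $i$ of the \emph{largest} interior value satisfying $\pi_1 < \pi_i < \pi_n$, which plays the same role as your difference-one swap (no third value separates the two swapped values, so all comparisons with other entries are preserved). Your weighted identity
\[
t(\tilde\pi)-t(\pi)\;=\;\sum_{p=1}^{j-1}2^{p-1}\,\mathrm{sign}(v-\pi_p)
\]
is correct, and it is genuinely more careful bookkeeping than the paper's: it isolates exactly the critical subsequences (those in which positions $j$ and $n$ are consecutive) and shows that the sign of $\Delta t$ is decided by the entry immediately to the left of $j$. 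The two principal cases ($v$ at a peak, or $u$ at a valley) are therefore sound. One caution: your general statement that the sign of the $\pm1$ contribution is ``determined by the direction of the step preceding $v$'' is only true when one of the swapped positions is an endpoint of $\pi$; for an interior--interior swap both the preceding and the following step matter.

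The gap is the residual case, and it is a real one, not a formality. The configuration you must still exclude --- $v=\pi_n-1$ at a valley and $u=\pi_1+1$ at a peak --- does occur among alternating permutations with $|\pi_n-\pi_1|\ge 2$: for $\pi=231645$ we have $\pi_1=2$, $\pi_6=5$, $v=4$ sits at the valley in position $5$, $u=3$ sits at the peak in position $2$, and both of your sign sums evaluate to $-1$. So the case cannot be waved away, and your composite move is not yet an argument. For the intermediate interchange of $u$ and $v$ the critical subsequences have neighbors on both sides, and a subsequence of the form $(\dots,p,j',j,q)$ with $\pi_p>u$ and $\pi_q<v$ contributes $-2$ to $\Delta t$; ``can be shown not to decrease $t$'' therefore requires a weighted count of the kind you carried out in the principal cases, which you have not supplied. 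Moreover, when $\pi_n-\pi_1\ge 4$ the values $u$ and $v$ are not value-adjacent, so their interchange is not comparison-preserving and the proposed ``iterative reduction'' is unspecified. Until this case is closed the proof is incomplete. (For what it is worth, the paper's own proof does not confront this difficulty either: it asserts that only the single subsequence $(\pi_1,\pi_i,\pi_n)$ changes under its swap, whereas your identity shows that every subsequence ending $(\dots,p,i,n)$ changes, with sign depending on $\pi_p$; your analysis makes visible exactly the case the published argument skips.)
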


\begin{proof}
	Suppose that $\pi$ does not satisfy this condition. Then there exists an $i$, $ 1<i<n$, such that either $\pi_1 < \pi_i < \pi_n$ or $\pi_1 > \pi_i > \pi_n$. Without loss of generality we may assume $\pi_1 < \pi_i < \pi_n$ since the reverse is merely an operation on the reverse of the permuation which is of course also in $RC(n)$. \\
	
	Pick $i$ the be the position corresponding to the largest value of $\pi_i$ such that the inequality $\pi_1 < \pi_i < \pi_n$ holds. Then consider the permutation $\pi' = \pi \cdot (i,n)$. We claim that $t(\pi')> t(\pi)$ which would contradict the assumption that $\pi \in RC(n)$.\\
	
	Note that for $1<j \neq i <n$ we have that either $\pi_j < \pi_i, \pi_n$ or $\pi_j > \pi_i, \pi_n$. Thus for any subsequence other than $\pi_1,\pi_i,\pi_n$ the total number of ascents and descents remains unchanged since the inequalities relative to the positions remain unchanged otherwise. Note however that the subsequence $\pi'_1,\pi'_i,\pi'_n$ increases the count of ascents and descents by one, thereby establishing the claim.
\end{proof}

 We next would like to observe how the values in odd positions and the values in even positions show up in roller coaster permutations. \\

\begin{thm}
	For $\pi \in RC(n)$ and $ \pi $ an alternating (not reverse-alternating) permutation $\pi_i > \pi_1,\pi_n$ for $i$ even and $\pi_i < \pi_1,\pi_n$ for $i$ odd.
	For $\pi \in RC(n)$ and $ \pi $ a reverse-alternating permutation $\pi_i < \pi_1,\pi_n$ for $i$ even and $\pi_i > \pi_1,\pi_n$ for $i$ odd.
\end{thm}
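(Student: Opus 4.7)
The plan is to derive the claim from Theorems 2.1 and 2.2 together with a value-swap argument in the spirit of the proof of Theorem 2.2. Assume $\pi$ is alternating; the reverse-alternating case then follows by reversing $\pi$, which is again a roller coaster permutation. By Theorem 2.2 the endpoints $\pi_1$ and $\pi_n$ are consecutive integers; set $a=\min(\pi_1,\pi_n)$ and $b=a+1=\max(\pi_1,\pi_n)$. Because the entries of $\pi$ are distinct and no integer lies strictly between $a$ and $b$, every interior $\pi_i$ is either \emph{high} ($\pi_i>b$) or \emph{low} ($\pi_i<a$). In the alternating case the interior even positions carry local maxima and the interior odd positions carry local minima, so the theorem amounts to the assertion that every interior local-max value is high and every interior local-min value is low.

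I would first dispose of the positions adjacent to the endpoints. From $\pi_1<\pi_2$ and $\pi_2\neq b$ one immediately obtains $\pi_2\geq a+2>b$, and a symmetric one-line inequality at the right-hand end, with the parity of $n$ dictating the direction of the final comparison, handles $\pi_{n-1}$. For a deeper interior position I would argue by contradiction. Suppose some interior even index $i$ has $\pi_i<a$; a counting check — namely that the number of interior local-max positions equals the number of high values exactly when the theorem holds — shows that this failure forces the simultaneous existence of an interior odd index $j$ with $\pi_j>b$.

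Now consider the value-swap $\tilde\pi=\pi\cdot(i,j)$. Since $\pi_{i\pm 1}<\pi_i<a<b<\pi_j<\pi_{j\pm 1}$, the alternating inequalities persist at both $i$ and $j$, so $\tilde\pi$ is still alternating. To compare $t(\tilde\pi)$ with $t(\pi)$ I would partition $X(\pi)$ according to how a subsequence meets $\{i,j\}$. Subsequences disjoint from $\{i,j\}$ are unaffected. Subsequences containing exactly one of $i,j$ must be grouped so that their aggregate change in $id$-value cancels. Subsequences containing both $i$ and $j$ are inspected directly: triples such as $(\pi_{i-1},\pi_i,\pi_j)$ and $(\pi_i,\pi_j,\pi_{j+1})$ each move from the monotone $id$-value $1$ to the alternating $id$-value $2$ under the swap, supplying a strictly positive increment that forces $t(\tilde\pi)>t(\pi)$ and contradicts $\pi\in RC(n)$.

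The main obstacle is the cancellation in the second class of subsequences. Unlike in Theorem 2.2, where one of the swapped positions was an endpoint and the remaining entries cleanly split into ``less than both swapped values'' and ``greater than both,'' here both swapped positions are interior and the two values $\pi_i<a$ and $\pi_j>b$ are separated by the entire middle range, so many other entries of $\pi$ lie strictly between them. One therefore has to produce an explicit bijection on the singly-incident subsequences under which paired $id$-values sum to the same quantity before and after the swap, which is where the real work lives; once this bookkeeping is in place, the triples identified above furnish the decisive positive contribution and the contradiction is immediate.
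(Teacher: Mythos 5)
Your overall strategy---derive a contradiction by exhibiting a transposition that increases $t$---is the same as the paper's, but two essential steps in your outline do not go through as written. First, the ``counting check'' asserting that a low interior even position forces the existence of a high interior odd position is false. At this stage nothing pins $\{\pi_1,\pi_n\}$ to the middle of $[n]$ (that is the content of the corollary that comes \emph{after} this theorem), so a hypothetical counterexample can have the shape of $\pi=461325$: alternating, $|\pi_n-\pi_1|=1$, with the low even position $\pi_4=3<\pi_1$ while \emph{every} interior odd position is also low. Your swap partner $j$ simply does not exist there, and your argument never addresses this case; you would need a separate mechanism (a swap involving an endpoint, or a reduction by complementation) to rule out permutations whose only failure is that the endpoints sit too high or too low in $[n]$.

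Second, even when both $i$ and $j$ exist, the cancellation you defer for subsequences meeting exactly one of $\{i,j\}$ is precisely the hard part, and it is not routine bookkeeping. In the paper's Theorem 2.2 the swapped value is chosen \emph{maximal} among those lying between $\pi_1$ and $\pi_n$, so every other entry compares the same way to both swapped values and no singly-incident subsequence changes its $id$-value at all. Your swap exchanges $\pi_i<a$ with $\pi_j>b$, and essentially every other interior entry lies strictly between these two values, so singly-incident subsequences genuinely change: a triple $(x,\pi_i,y)$ with $x>\pi_j>y>\pi_i$ drops from $id=2$ to $id=1$ under the swap. The natural pairing (replace index $i$ by index $j$ in the index set) reorders the entries situated between positions $i$ and $j$ and does not visibly cancel such losses. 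The paper sidesteps this by locating the leftmost \emph{adjacent} pair of values exceeding $\max(\pi_1,\pi_n)$ and swapping against the largest available value below $\pi_j$ occurring to its left---the same extremality trick as in Theorem 2.2---so that the comparisons of all uninvolved entries are preserved. Without an analogous extremal choice, or an explicit pairing with a verified inequality, your proof has a gap exactly where you say ``the real work lives.''
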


\begin{proof}
We will show this in the context of an alternating permutation in $RC(n)$ as the proof for reverse-alternating is analogous.\\
Begin by finding the leftmost indices $j,j+1$ where both $\pi_{j}$ and $\pi_{j+1}$ are greater than the maximum of $\pi_1$ and $\pi_n$. Note that $\pi_{j} > \pi_{j+1}$ by the choice of the leftmost pair. Also note that by alternating we have that $\pi_{j+2}$ is also greater than the maximum of $\pi_1$ and $\pi_n$.\\

Let $\sigma = \pi (l,r)$ where the index $l$ is such that $\pi_l = max \{\pi_i \;|\; 1 \leq i <j , \pi_1,\pi_n \leq \pi_i <\pi_j \}$. Then observe that $t(\sigma) > t(\pi)$ via an argument analogous to that used in proving that these permutations are alternating since the index $l$ selected for make the exchange is chosen precisely to avoid interfering with subsequences not involving the exchanged positions. This contradicts that $\pi \in RC(n)$.

\end{proof}

To simplify the idea, this is basically stating that all the odd positioned entires have values that are all greater than the start and ending value of the permutation or all less than, and the reverse for the even positioned entries. Based on this result we quickly get major insights into the structure of these permutations. Breaking these into the cases where $n$ is odd/even and the permutation is alternating versus reverse-alternating  will add additional structural insight which follows easily from the results to this point.

\begin{cor}
	For $\pi \in RC(n)$ with $n$ odd and $ \pi $ an alternating permutation we have that:
	 \begin{itemize}
	 	\item $\{\pi_1, \pi_n\} = \{\frac{n-1}{2},\frac{n+1}{2} \}$
	 	\item $\{\pi_j \; | \; 3\leq j \leq n-2, \; j \mbox{ odd} \} = \{1, \ldots, \frac{n-3}{2} \}$
	 	\item $\{\pi_j \; | \; 2\leq j \leq n-1, \; j \mbox{ even} \} = \{\frac{n+3}{2}, \ldots, n \}$
	 \end{itemize}
	 
	 For $\pi \in RC(n)$ with $n$ odd and $ \pi $ a reverse-alternating permutation we have that:
	 \begin{itemize}
	 	\item $\{\pi_1, \pi_n\} = \{\frac{n+1}{2},\frac{n+3}{2} \}$
	 	\item $\{\pi_j \; | \; 2\leq j \leq n-1, \; j \mbox{ odd} \} = \{\frac{n+5}{2}, \ldots, n \}$
	 	\item $\{\pi_j \; | \; 3\leq j \leq n-2, \; j \mbox{ even} \} = \{1, \ldots, \frac{n-1}{2} \}$
	 \end{itemize}
	 
	 For $\pi \in RC(n)$ with $n$ even and $ \pi $ an alternating permutation we have that:
	 \begin{itemize}
	 	\item $\{\pi_1, \pi_n\} = \{\frac{n}{2},\frac{n}{2}+1 \}$
	 	\item $\{\pi_j \; | \; 3\leq j \leq n-2, \; j \mbox{ odd} \} = \{1,\ldots, \frac{n}{2}-1 \}$
	 	\item $\{\pi_j \; | \; 2\leq j \leq n-1, \; j \mbox{ even} \} = \{\frac{n}{2}+2, \ldots, n \}$
	 \end{itemize}
	 
	 For $\pi \in RC(n)$ with $n$ even and $ \pi $ a reverse-alternating permutation we have that:
	 \begin{itemize}
	 	\item $\{\pi_1, \pi_n\} = \{\frac{n}{2},\frac{n+2}{2} \}$
	 	\item $\{\pi_j \; | \; 2\leq j \leq n-1, \; j \mbox{ odd} \} = \{\frac{n}{2}+2, \ldots, n \}$
	 	\item $\{\pi_j \; | \; 3\leq j \leq n-2, \; j \mbox{ even} \} = \{1,\ldots, \frac{n}{2}-1 \}$
	 \end{itemize}
	 
\end{cor}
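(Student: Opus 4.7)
The plan is to combine Theorem 2.2 (first and last values differ by exactly one) with Theorem 2.3 (parity-based bounds on interior entries) and then let pigeonhole do the rest. By Theorem 2.2 we may write $\{\pi_1,\pi_n\}=\{a,a+1\}$ for some $a$. By Theorem 2.3, in the alternating case every interior odd-position value lies in $\{1,\ldots,a-1\}$ and every interior even-position value lies in $\{a+2,\ldots,n\}$; in the reverse-alternating case the two roles are swapped. Since $\pi$ is a bijection of $[n]$, these inclusions will be forced to be equalities once I check the cardinalities match, and matching them will pin down $a$.

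First I would tally interior positions of each parity. For $n$ odd, both endpoints sit at odd indices, so there are $(n-3)/2$ interior odd positions and $(n-1)/2$ interior even positions. For $n$ even, position $1$ is odd and position $n$ is even, giving $n/2-1$ interior odd positions and $n/2-1$ interior even positions. In each case I would then match these counts against the number of integers below $a$ and above $a+1$ in $[n]$, namely $a-1$ and $n-a-1$, and solve for $a$. For instance, $n$ odd alternating forces $a-1=(n-3)/2$, i.e.\ $a=(n-1)/2$, so $\{\pi_1,\pi_n\}=\{(n-1)/2,(n+1)/2\}$; $n$ even reverse-alternating forces $n-a-1=n/2-1$, i.e.\ $a=n/2$, so $\{\pi_1,\pi_n\}=\{n/2,(n+2)/2\}$; the remaining two cases are identical.

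Once $a$ is determined, the interior set identities are automatic. The interior odd-position values (in the alternating case) form a subset of $\{1,\ldots,a-1\}$ of cardinality $a-1$, hence equal the whole set; symmetrically the interior even-position values exhaust $\{a+2,\ldots,n\}$. Swapping parities handles the reverse-alternating subcases. I would record the four resulting lists of sets as displayed in the statement.

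I do not foresee a genuine obstacle: the content is a bookkeeping corollary of Theorems 2.2 and 2.3, and the only place care is needed is keeping track of which endpoint(s) are of which parity when $n$ changes parity, so that the counts of interior odd and even positions are tallied correctly.
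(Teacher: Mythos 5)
Your proposal is correct and follows essentially the same route as the paper: combine the endpoint condition $|\pi_n-\pi_1|=1$ with the parity-based bounds of the preceding theorem and let the cardinality count force the inclusions to be equalities. You simply spell out the pigeonhole bookkeeping that the paper's one-line proof leaves implicit.
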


\begin{proof}
	Merely observe that the previous result together with the fact that the start and end of the permutation differ by one gives that all values greater than the start and end are either in even or odd positions depending on whether this is alternating or reverse-alternating. 
\end{proof}
\vspace{2mm}

Note here that for $n$ odd a permutation and its reverse will be both alternating or both reverse-alternating. When $n$ is even this is no longer the case, which starts to lend some insight into how different the order of $RC(n)$ is in the odd versus even cases.\\

While we have seen that roller coaster permutations are in fact alternating we will prove a stronger condition that we shall refer to as recursively alternating. We will use the notation $\pi[i,j]$ to refer to the restriction of the permutation $\pi$ to positions $i$ through $j$.\\

\begin{defn}
First define $\pi_{odd}$ as the restriction to the odd indexed positions of $\pi[2,n-1]$ and $\pi_{even}$ as the restriction to the even indexed positions of $\pi[2,n-1]$.\\

A permutation $\pi$ is said to be \emph{recursively alternating} if:

\begin{itemize}
	\item 
	$\pi$ is alternating or reverse alternating.
	\item 
	$\pi_{odd}$ and $\pi_{even}$ are alternating or reverse alternating.
	\item
	Recursively iterating this process yields alternating or reverse alternating, ie for $\pi_{odd, odd}$ and $\pi_{odd,even}$, etc.

\end{itemize}

\end{defn}

\begin{thm}
	If $\pi \in RC(n)$ then $\pi$ is recursively alternating.
\end{thm}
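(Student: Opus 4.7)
The plan is to proceed by strong induction on $n$. The base cases are small $n$ for which $\pi_{odd}$ and $\pi_{even}$ have length less than three, making the recursive condition vacuous. For the inductive step, the goal is to reduce to the claim that $\pi_{odd}$ and $\pi_{even}$, viewed as permutations of $\{1,\ldots,m\}$ and $\{1,\ldots,m'\}$ under the natural relabelings, themselves lie in $RC(m)$ and $RC(m')$. Granting this reduction, the induction hypothesis gives that $\pi_{odd}$ and $\pi_{even}$ are recursively alternating, and combined with $\pi$ being alternating by Theorem 2.1, this yields that $\pi$ is recursively alternating by definition.

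The key structural input is Corollary 2.4, which partitions the values of $\pi$ into three consecutive blocks: the smallest $m$ values at the interior-odd positions, the two middle values $\{\pi_1,\pi_n\}$ at the endpoints, and the largest $m'$ values at the interior-even positions (with the obvious reverse-alternating analog). In particular every interior-odd value is strictly less than every non-interior-odd value, so any rearrangement $\tilde\pi$ of the interior-odd values among themselves is still alternating with the same endpoints, and in any subsequence $\sigma$ of $\pi$ the sign of a comparison between an interior-odd entry and a non-interior-odd entry is determined solely by position type, independent of the specific interior-odd values placed at those positions.

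To establish $\pi_{odd}\in RC(m)$, I argue by contradiction, adapting the swap argument of Theorem 2.1 to the interior-odd level. If $\pi_{odd}\notin RC(m)$, then $\pi_{odd}$ must violate one of the structural properties forced on an $RC$-permutation, most directly alternation at some recursive level. Pick three consecutive entries of $\pi_{odd}$ at $\pi$-positions $p_1<p_2<p_3$ (with $p_{i+1}-p_i=2$) exhibiting such a violation and set $\tilde\pi=\pi\cdot(p_2,p_3)$. Value separation ensures $\tilde\pi$ remains alternating. For each subsequence $\sigma\in X(\pi)$, only signs in $\sigma$'s sign sequence between pairs of interior-odd entries that happen to be consecutive within $\sigma$ can change under the swap; every other sign is fixed by position type. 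Grouping subsequences by their pattern of interior-odd positions and summing over the possible placements of non-interior-odd positions outside the odd core, the net change $t(\tilde\pi)-t(\pi)$ matches a strictly positive combinatorial sum of sign-change increments inside $\pi_{odd}$, together with a bonus $+1$ from a specific short subsequence supported on $p_1,p_2,p_3$ (or on $p_2,p_2+1,p_3$) analogous to the final step of Theorem 2.1. This strict positivity contradicts $\pi\in RC(n)$, so $\pi_{odd}\in RC(m)$; the argument for $\pi_{even}$ is symmetric.

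The main obstacle is the non-adjacency of the swap in $\pi$: the positions $p_2$ and $p_3$ are separated by the interior-even position $p_2+1$ (and, at deeper levels of the recursion, by still more intermediate positions), and subsequences of $\pi$ may or may not include these intermediate positions. The pairing of subsequences containing both swapped positions with those containing only one — the analog of the pairing in Theorem 2.1 — therefore requires explicit tracking of subsets of intermediate positions, and one must check that the bonus contribution of the distinguished short subsequence survives this extra bookkeeping so that the net change remains strictly positive.
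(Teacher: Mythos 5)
There is a genuine gap, and it sits at the hinge of your induction. You reduce the theorem to the claim that $\pi_{odd}\in RC(m)$ (suitably relabeled), and you try to prove that claim by contradiction via the sentence ``if $\pi_{odd}\notin RC(m)$, then $\pi_{odd}$ must violate one of the structural properties forced on an $RC$-permutation, most directly alternation at some recursive level.'' That implication is false: membership in $RC(m)$ means $t$ is \emph{maximized}, and there are far more recursively alternating permutations of length $m$ than there are roller coaster permutations (already $|RC(6)|=4$ while there are many alternating permutations of length $6$). So a failure of $\pi_{odd}$ to lie in $RC(m)$ need not manifest as any failure of alternation, your swap argument produces no contradiction in that case, and the inductive hypothesis can never be invoked. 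To genuinely prove $\pi_{odd}\in RC(m)$ you would need something much stronger: that $t(\pi)$ is a strictly increasing function of $t(\pi_{odd})$ when the values outside the interior-odd positions are held fixed, so that any $\tau$ with $t(\tau)>t(\pi_{odd})$ could be substituted in to beat $\pi$. Your grouping-by-pattern sketch is aimed at a single transposition, not at an arbitrary replacement, and does not deliver this.

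The good news is that the theorem does not require $\pi_{odd}\in RC(m)$ at all: the definition of recursively alternating only asks for alternation at each level, so the correct move (and the one the paper takes) is to drop the induction and the $RC(m)$ claim entirely, and instead show directly that if any level fails to alternate, a swap of the two offending entries increases $t(\pi)$, using the value separation of Corollary 2.4 to guarantee that comparisons with entries of the other parity class are unaffected. Two pieces of work then remain, and you have correctly identified one of them: the swapped positions are non-adjacent in $\pi$, so the pairing of subsequences from Theorem 2.1 must be redone with the intermediate positions tracked explicitly, and you do not carry this out. The second piece, which you do not address, is that iterating to $\pi_{odd,odd}$ and beyond requires value separation \emph{at each deeper level}; Corollary 2.4 gives it only for the first level, and since you cannot conclude $\pi_{odd}\in RC(m)$, you cannot simply reapply Theorem 2.3 to $\pi_{odd}$ to get it. That separation must be derived directly from $\pi\in RC(n)$, and as written your argument (like the paper's own sketch) leaves this unestablished.
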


\begin{proof}
Assume $\pi \in RC(n)$ and is therefore alternating or reverse-alternating. Consider $\pi_{odd}$ and assume $\pi_{odd}$ is not alternating or reverse-alternating as the $\pi_{even}$ case is analogous. 
Apply the same argument used to show that roller coaster permutations were alternating noting that this modification to $\pi_{odd}$ does not effect the direction of inequalities relative to other positions of $\pi$ since all values in odd positions are either all greater than or all less than the values of all even positions.
Lastly iterate the argument as required.

\end{proof}

\bibliographystyle{plain}

\end{document}